\newtheorem{theorem}{Theorem}[section]
\newtheorem{example}{Example}[section]
\newtheorem{prop}{Proposition}[section]
\newtheorem{remark}{Remark}[section]
\newtheorem{defi}{Definition}[section]
\def\O{\Omega}
\def\R{{\mathbb R}}
\def\g{\gamma}
\begin{document}
\section*{}

\setcounter{equation}{0}
\title[]{On the interpolation space $(L^p(\O), W^{1,p}(\O))_{s,p}$ in non-smooth domains}

\author{Irene Drelichman}
\address{IMAS (UBA-CONICET) \\
Facultad de Ciencias Exactas y Naturales\\
Universidad de Buenos Aires\\
Ciudad Universitaria\\
1428 Buenos Aires\\
Argentina}
\email{irene@drelichman.com}

\author{Ricardo G. Dur\'an}
\address{IMAS (UBA-CONICET) and Departamento de Matem\'atica\\
Facultad de Ciencias Exactas y Naturales\\
Universidad de Buenos Aires\\
Ciudad Universitaria\\
1428 Buenos Aires\\
Argentina}
\email{rduran@dm.uba.ar}

\thanks{Supported by ANPCyT under grant PICT 2014-1771, by CONICET under grant 11220130100006CO  and by Universidad de Buenos Aires under grant 20020120100050BA. The authors are members of
CONICET, Argentina.}

\keywords{}

\subjclass[2010]{Primary: 46E35; Secondary: 46B70}

\begin{abstract}
We show that, for certain non-smooth bounded domains $\O\subset\R^n$, the real interpolation space  $(L^p(\O), W^{1,p}(\O))_{s,p}$ is  the subspace  $\widetilde W^{s,p}(\O) \subset L^p(\O)$ induced by the restricted fractional seminorm
$$
|f|_{\widetilde W^{s,p}(\O)} =  \Big( \int_\O \int_{|x-y|<\frac{d(x)}2} \frac{|f(x)-f(y)|^p}{|x-y|^{n+sp}} \, dy \,dx \Big)^\frac{1}{p}.
$$
In particular, the above result includes simply connected uniform domains in the plane, for which a characterization of the interpolation space was previously unknown.
\end{abstract}
\maketitle

\section{Introduction}

The purpose of this article is to   characterize the real interpolation space $(L^p(\O), W^{1,p}(\O))_{s,p}$ (see Definition \ref{interpolado}) for certain non-smooth  bounded domains $\O\subset\R^n$.

To be precise, let us recall that the usual fractional Sobolev space $W^{s,p}(\O)$ is the subspace of $L^p(\O)$ induced by the seminorm
$$
|f|_{W^{s,p}(\O)} =  \Big( \int_\O \int_\O \frac{|f(x)-f(y)|^p}{|x-y|^{n+sp}} \, dy \,dx \Big)^\frac{1}{p}.
$$
When $\O$ is a Lipschitz domain  it is known that $W^{s,p}(\O)$ coincides with the real interpolation space  $(L^p(\Omega), W^{1,p}(\Omega))_{s,p}$ (see the discussion below). However,  it is also known that this cannot be the case for arbitrary domains, since one has $W^{1,p}(\O)\subset (L^p(\O), W^{1,p}(\O))_{s,p} \subset L^p(\O)$, and it is easy to construct  domains for which $W^{1,p}(\O) \not\subset W^{s,p}(\O)$ for certain values of $s$, a typical example being a square minus a slit (see Example \ref{slit}).

Our main result is that, for a class of domains in $\R^n$ which we call {\it admissible} (see Definition \ref{admissible}), which contains certain non-Lipschitz domains including simply connected uniform domains in the plane, there holds $(L^p(\O), W^{1,p}(\O))_{s,p}= \widetilde W^{s,p}(\O)$, where $\widetilde W^{s,p}(\Omega)$ is the subspace of $L^p(\O)$ induced by the seminorm
$$
|f|_{\widetilde W^{s,p}(\O)} =  \Big( \int_\O \int_{|x-y|<\frac{d(x)}2} \frac{|f(x)-f(y)|^p}{|x-y|^{n+sp}} \, dy \,dx \Big)^\frac{1}{p}.
$$
This larger fractional space has been previously introduced in the literature in connection with fractional Poincar\'e inequalities in irregular domains \cite{DD-frac, D, HV}, and it is known that $W^{s,p}(\O)=\widetilde W^{s,p}(\O)$ when $\O\subset \R^n$ is a Lipschitz domain \cite[equation (13)]{Dy}. 

The rest of the paper is organized as follows: in Section 2 we introduce some notations and necessary preliminaries; in Section 3 we define the class of admissible domains  and prove that if $\O\subset\R^n$ is admissible, then $(L^p(\O), W^{1,p}(\O))_{s,p}=\widetilde W^{s,p}(\O)$; finally, in Section 4 we consider some concrete examples of admissible domains, which include, among others, simply connected uniform domains in the plane.

\section{Notation and preliminaries}

Throughout the paper, $1\le p<\infty$ and $p'$ is its conjugate exponent, $\frac{1}{p}+\frac{1}{p'}=1$, $\O\subset \R^n$ ($n\ge 2$) will be a bounded domain, and $d(x)$ will denote the distance of a point $x\in\O$ to the boundary of $\O$. If $\O=\cup_j \O_j$, the distance of a point $x\in \O_j$ to the boundary of $\O_j$ will be denoted by $d_j(x).$ Finally, $C$  will denote a positive constant that may change even within a single string of inequalities.

\begin{defi}
\label{interpolado}
For $0<s<1$, the  interpolation space obtained by the real method (see, e.g. \cite{BS}) is given by
\begin{equation}
(L^p(\Omega), W^{1,p}(\Omega))_{s,p} = \{f: f \in L^p(\Omega) + W^{1,p}(\Omega) \mbox{ s.t. } |f|_{(L^p(\Omega), W^{1,p}(\Omega))_{s,p}} < \infty\},
\end{equation}
where
\begin{equation}
\label{semiK}
|f|_{(L^p(\Omega), W^{1,p}(\Omega))_{s,p}} = \Big\{  \int_0^\infty \Big(\ell^{-s} K(\ell,f)\Big)^p \, \frac{d\ell}{\ell} \Big\}^\frac{1}{p}
\end{equation}
and the $K$-functional is given by
\begin{equation}
\label{defiK}
K(\ell, f) = K(\ell, f ;L^p(\O), W^{1,p}(\O)) = \inf\{ \|g\|_{L^p(\O)} + \ell \|h\|_{W^{1,p}(\O)} : f = g + h \}
\end{equation}
\end{defi}

\begin{remark}
\label{cotaK}
Observe that it is equivalent to consider
the integral in \eqref{semiK} in the interval $(0,\tau)$ for some sufficiently small $\tau>0$. Indeed, since $K(\ell, f)\le \|f\|_{L^p}$, we always have
$$
\int_\tau^\infty \Big(\ell^{-s} K(\ell,f)\Big)^p \, \frac{d\ell}{\ell} \le \|f\|_{L^p} \int_\tau^\infty  \ell^{-sp} \, \frac{d\ell}{\ell} < +\infty.
$$
Also, it suffices to consider, for a given decomposition, $\int_0^\tau \ell^{-sp} (\|g\|_{L^p} + \ell \|\nabla h\|_{L^p})^p \, \frac{d\ell}{\ell}$. This is because the remaining term satisfies
\begin{align*}
\int_0^\tau (\ell^{1-s} \|h\|_{L^p})^p \, \frac{d\ell}{\ell} &\le C \left\{\int_0^\tau (\ell^{1-s} \|f\|_{L^p})^p \, \frac{d\ell}{\ell} +  \int_0^\tau  (\ell^{1-s} \|g\|_{L^p})^p \, \frac{d\ell}{\ell}\right\}\\
& \le C  \left\{ \|f\|_{L^p}^p \int_0^\tau \ell^{p-sp} \, \frac{d\ell}{\ell} + \int_0^\tau (\ell^{-s} \|g\|_{L^p})^p \, \frac{d\ell}{\ell}\right\}
\end{align*}
and, therefore, it will be bounded provided the other terms are.
\end{remark}

It is known that when $\O\subset \R^n$ is a Lipschitz domain,  $(L^p(\O), W^{1,p}(\O))_{s,p}=B^s_{p,p}(\O)$ (see \cite{JS} and \cite[Section 11]{DS-book}), where $B^s_{p,p}$ is a Besov space, we refer the reader to \cite{DS} for its definition and properties. But, for Lipschitz domains one also has   $B^s_{p,p}(\O)=W^{s,p}(\O)$ (see \cite[Theorem 6.7]{DS}) and, hence, $(L^p(\O), W^{1,p}(\O))_{s,p}= W^{s,p}(\O)$ as mentioned in the introduction.  For more general domains, namely $(\varepsilon, \delta)$-uniform domains, it is also possible to interpolate between $L^p(\O)$ and spaces in the Besov scale (see \cite{DS}),   but to our knowledge there are no results proving or disproving that $B^s_{p,p}(\O)=W^{s,p}(\O)$ for such domains, and hence no known characterizations of $(L^p(\O), W^{1,p}(\O))_{s,p}$. 

We first notice that, for arbitrary domains, the interpolation space may be larger than $W^{s,p}(\O)$, as the next example shows. 

\begin{example}
\label{slit}
Let $\O=(-1,1)^2 \setminus ((0,1)\times \{0\})$ and let $f\in W^{1,p}(\O)$ such that $f(x)=1$ for $x\in (\frac12, 1)\times (0,1)$, and $f(x)=0$ for $x\in (\frac12, 1)\times (-1,0)$. Then $f\not\in W^{s,p}(\O)$ for $s>\frac{1}{p}$. On the other hand, it follows from the definition of the interpolation space that $W^{1,p}(\O) \subset (L^p(\Omega), W^{1,p}(\Omega))_{s,p}$, whence, $(L^p(\O), W^{1,p}(\O))_{s,p} \neq W^{s,p}(\O)$ for these values of $s$.
\end{example}

For our proof we will require the existence of  partitions of unity supported on John domains. These domains were introduced by F. John in \cite{J} and  given that name in \cite{MS}, we recall their definition below:
\begin{defi}
A bounded domain $\O\subset\R^n$ is a John domain if there exists $x_0\in\O$, a family of
rectifiable curves given by $\g(t,y)$, $0\le t\le 1$, $y\in\O$,
and  positive constants $\lambda$ and $k$ such that,
\begin{enumerate}
\item $\gamma(0,y)=y$, $\gamma(1,y)=x_0$
\medskip
\item  $d(\gamma(t,y))\ge \lambda t$  for all $t\in[0,1]$\label{prop2}
\medskip
\item  $|\dot\gamma(t,y))|\le k$
\end{enumerate}
\end{defi}

\section{Proof of our main result}

\begin{theorem}
\label{teo1}
For any bounded domain $\O \subset \R^n$, there holds $(L^p(\Omega), W^{1,p}(\Omega))_{s,p} \subset \widetilde{W}^{s,p}(\Omega)$.
\end{theorem}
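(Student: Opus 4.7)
The natural strategy is to bound $|f|_{\widetilde W^{s,p}(\O)}$ by the $K$-functional characterization of the interpolation seminorm. For each $\ell>0$, I would pick a quasi-optimal decomposition $f=g_\ell+h_\ell$ with $\|g_\ell\|_{L^p(\O)}+\ell\|h_\ell\|_{W^{1,p}(\O)}\le 2K(\ell,f)$, and then control the integrand of $|f|_{\widetilde W^{s,p}(\O)}^p$ at the scale $\ell=|x-y|$. Concretely, pass to polar coordinates $y=x+r\omega$ with $0<r<d(x)/2$ and use the triangle inequality
$$
|f(x)-f(x+r\omega)|^p\le C\bigl(|g_r(x)|^p+|g_r(x+r\omega)|^p+|h_r(x)-h_r(x+r\omega)|^p\bigr),
$$
so that after Fubini the problem splits into three pieces, each parametrized by the scale $r\in(0,\tau)$ (invoking Remark \ref{cotaK} to truncate).

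The two $g_r$-pieces are handled by elementary change of variables: for fixed $r$ and $\omega\in S^{n-1}$, the map $x\mapsto x+r\omega$ is measure-preserving, and the constraint $r<d(x)/2$ forces $x+r\omega\in\O$, so after extending $g_r$ by zero we obtain
$$
\int_\O\int_{S^{n-1}}|g_r(x+r\omega)|^p\,d\omega\,dx\le |S^{n-1}|\,\|g_r\|_{L^p(\O)}^p,
$$
and similarly for $|g_r(x)|^p$. Integrating in $r$ against $r^{-1-sp}$ yields
$$
C\int_0^\tau\bigl(r^{-s}\|g_r\|_{L^p(\O)}\bigr)^p\frac{dr}{r}\le C\int_0^\tau\bigl(r^{-s}K(r,f)\bigr)^p\frac{dr}{r}.
$$

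For the $h_r$-piece, I use exactly the geometric hypothesis $|x-y|<d(x)/2$: the segment from $x$ to $x+r\omega$ lies inside the ball $B(x,d(x)/2)\subset\O$, so by the absolute continuity of $W^{1,p}$ functions along lines and H\"older's inequality,
$$
|h_r(x)-h_r(x+r\omega)|^p\le r^{p-1}\int_0^r|\nabla h_r(x+t\omega)|^p\,dt.
$$
Inserting this, applying Fubini and using once more the translation invariance on $S^{n-1}$ gives
$$
C\int_0^\tau\frac{r^{p-1}\cdot r}{r^{1+sp}}\|\nabla h_r\|_{L^p(\O)}^p\,dr = C\int_0^\tau\bigl(r^{-s}\cdot r\|\nabla h_r\|_{L^p(\O)}\bigr)^p\frac{dr}{r}\le C\int_0^\tau\bigl(r^{-s}K(r,f)\bigr)^p\frac{dr}{r}.
$$
Combining the three bounds yields $|f|_{\widetilde W^{s,p}(\O)}\le C|f|_{(L^p(\O),W^{1,p}(\O))_{s,p}}$, which proves the inclusion.

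The only place where the shape of $\O$ intervenes is the integration-along-segments step for $h_r$, and the restricted fractional seminorm is precisely designed so that this step is free: no chain-of-balls argument or Sobolev extension is needed, which is why the theorem holds for arbitrary bounded domains. I expect the main bookkeeping difficulty to be keeping track of the $r$-dependence of the decomposition $g_r,h_r$ when exchanging the order of integration, and (marginally) justifying the line-integral representation for the Sobolev function $h_r$; this last point is standard since the relevant segments lie in a Euclidean ball compactly contained in $\O$.
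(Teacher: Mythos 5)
Your proposal is correct and takes essentially the same route as the paper: choose a quasi-optimal decomposition $f=g_r+h_r$ at scale $r$, estimate the $h_r$-contribution via the line-integral representation of $h_r(x+z)-h_r(x)$ (valid because $|z|<d(x)/2$ keeps the segment in $\O$), estimate the $g_r$-contribution by extension by zero and measure-preservation of translation, and then integrate over the scale $r=|x-y|$ to recover the $K$-functional. The only cosmetic difference is that you pass to polar coordinates at the outset, whereas the paper keeps the Cartesian increment $z$ and sets $r=|z|$ at the end.
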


\begin{proof}
Given $f\in (L^p(\Omega), W^{1,p}(\Omega))_{s,p}$, observe that, since  $L^p(\Omega) + W^{1,p}(\Omega) = L^p(\Omega)$,   $\|f\|_{L^p} \le C |f|_{(L^p, W^{1,p})_{s,p}}$. Hence, it  suffices to prove that 
\begin{equation}
\int_{\Omega} \int_{|x-y|< \frac{d(x)}{2}} \frac{|f(x)-f(y)|^p}{|x-y|^{n+sp}} \, dy \, dx \le C \int_0^\infty \Big(\ell^{-s} K(\ell,f)\Big)^p \, \frac{d\ell}{\ell} \end{equation}

To this end, for each $r\in \R_+$  choose $g_r$ and $h_r$ such that $f=g_r+h_r$ and $\|g_r\|_{L^p}+ r \|\nabla h_r\|_{L^p}\le 2 K(f,r)$. By density, we may also assume that $h_r$ is smooth.

We clearly have that $|f|_{\widetilde W^{s,p}(\Omega)} \le  C ( |h_r|_{\widetilde W^{s,p}(\Omega)} + |g_r|_{\widetilde W^{s,p}(\Omega)})$ for every $r\in\R_+$ and we may bound both terms separately.

 Observe that $x+tz \in \Omega$ for all $t\in [0,1]$ and all $x,z \in \Omega$ such that $|z|<\frac{d(x)}{2}$. Therefore, extending $\nabla h_r$ by zero outside $\O$, we have
 \begin{align*}
 \int_{\Omega} \int_{|z|\le \frac{d(x)}{2}} \frac{|h_r(z+x)-h_r(x)|^p}{|z|^{n+sp}} \, dz \, dx &\le \int_\Omega  \int_{|z|\le \frac{d(x)}{2}} \frac{1}{|z|^{n+sp}} \Big| \int_0^1 \nabla h_r(x+tz) \cdot z \, dt \Big|^p \, dx \, dz\\
&\le \int_{\mathbb{R}^n}   \frac{|z|^p}{|z|^{n+sp}} \int_0^1  \int_{\mathbb{R}^n}  |\nabla h_r(x+tz)|^p \, dx  \, dt \, dz\\
 &\le \int_{\R^n}  \frac{|z|^p}{|z|^{n+sp}} \|\nabla h_r\|_p^p \, dz \\
 \end{align*}

 Similarly, extending $g_r$ by zero outside $\O$, we have
 \begin{align*}
 \int_\Omega  \int_{|z|\le \frac{d(x)}{2}} \frac{|g_r(z+x)-g_r(x)|^p}{|z|^{n+sp}} \, dz \, dx &\le C \int_\Omega \int_\Omega \frac{|g_r(x)|^p}{|z|^{n+sp}} \, dx \, dz + \int_\Omega \int_{|z|<\frac{d(x)}{2}} \frac{|g_r(z+x)|^p}{|z|^{n+sp}} \, dz \, dx  \\
 &\le C\int_{\mathbb{R}^n} \frac{\|g_r\|_p^p}{|z|^{n+sp}} \, dz + \int_{\mathbb{R}^n} \int_\Omega \frac{|g_r(w)|^p}{|z|^{n+sp}} \, dw \, dz\\
 &\le C\int_{\mathbb{R}^n} \frac{\|g_r\|_p^p}{|z|^{n+sp}} \, dz 
 \end{align*}
 
 Finally, choosing $r=|z|$ and using polar coordinates, we obtain
 
 \begin{align*}
|f|_{\widetilde W^{s,p}(\Omega)}^p &\le C \Big( \int_0^\infty \frac{1}{r^{n+sp-p}} \|\nabla h_r\|_p^p \, r^{n-1} \, dr + \int_0^\infty \frac{\|g\|_p^p}{r^{n+sp}} \, r^{n-1} \, dr \Big) \\
 &\le  C \int_0^\infty r^{-sp} \Big(r \|\nabla h_r\|_{L^p} + \|g_r\|_{L^p}\Big)^p \, \frac{dr}{r} \\
 &\le C  \int_0^\infty \Big( r^{-s} K(r,f)\Big)^p \, \frac{dr}{r}.
 \end{align*}

This concludes the proof.
 \end{proof}
 
 \begin{defi}
 \label{admissible}
 We say that a bounded domain $\O \subset\R^n$ is  admissible  provided that there exists $\tau>0$ such that, for each $0<\ell<\tau$, there exist a  partition $\O = \cup_j \O_{\ell,j}$ and an associated partition of unity $\{\psi_{\ell,j}\}_j \in W^{1,\infty}(\O_{\ell,j})$ with the following properties:
 \begin{enumerate}
 \item  $\sum_j \chi_{\O_{\ell,j}}(x) \le C$ for all $x\in\R^n$
 \item $\|\psi_{\ell,j}\|_\infty \le C$, $\|\nabla\psi_{\ell,j}\|_\infty \le C \ell^{-1}$ for every $j$
 \item Each $\O_{\ell,j}$ is a John domain with constants of order $\ell$
  \item $diam(\O_{\ell,j}) \sim \ell$ for every $j$
 \end{enumerate}
 The reader should remark that each of these partitions is necessarily finite (depending on $\ell$) because the $\O_{\ell,j}$'s are a covering of a bounded domain made of finitely overlapping sets of essentially the same size.
 \end{defi}

\begin{theorem}
If $\O\subset\R^n$ is an admissible domain as in Definition \ref{admissible}, then $\widetilde{W}^{s,p}(\Omega) = (L^p(\Omega), W^{1,p}(\Omega))_{s,p}$.
\end{theorem}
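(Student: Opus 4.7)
Since Theorem \ref{teo1} already gives the inclusion $(L^p(\Omega), W^{1,p}(\Omega))_{s,p} \subset \widetilde W^{s,p}(\Omega)$, the plan is to prove the reverse inclusion. Given $f \in \widetilde W^{s,p}(\Omega)$, for each $\ell\in(0,\tau)$ I will construct an explicit decomposition $f = g_\ell + h_\ell$ with $h_\ell \in W^{1,p}(\Omega)$, bound $\|g_\ell\|_{L^p}$ and $\ell\|\nabla h_\ell\|_{L^p}$ in terms of scale-$\ell$ pieces of the restricted seminorm, and conclude via Remark \ref{cotaK} that $\int_0^\tau (\ell^{-s} K(\ell,f))^p \, \frac{d\ell}{\ell} \le C |f|_{\widetilde W^{s,p}(\Omega)}^p$.

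The decomposition uses the partition $\{\Omega_{\ell,j}\}$ and the partition of unity $\{\psi_{\ell,j}\}$ provided by admissibility: letting $f_{\ell,j}$ denote the average of $f$ over $\Omega_{\ell,j}$, set $h_\ell = \sum_j f_{\ell,j}\psi_{\ell,j}$ and $g_\ell = f-h_\ell$. Writing $g_\ell(x) = \sum_j (f(x)-f_{\ell,j})\psi_{\ell,j}(x)$ and using finite overlap one gets $\|g_\ell\|_{L^p}^p \le C\sum_j \|f-f_{\ell,j}\|_{L^p(\Omega_{\ell,j})}^p$, and since each $\Omega_{\ell,j}$ is a John domain of scale $\ell$, the fractional Poincar\'e inequality with restricted seminorm on John domains (as in \cite{DD-frac, D, HV}) gives $\|f-f_{\ell,j}\|_{L^p(\Omega_{\ell,j})}^p \le C\ell^{sp}|f|_{\widetilde W^{s,p}(\Omega_{\ell,j})}^p$. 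For the gradient piece I would use $\sum_j \nabla\psi_{\ell,j}\equiv 0$ to write, for $x\in\Omega_{\ell,j_0}$, $\nabla h_\ell(x) = \sum_j (f_{\ell,j}-f_{\ell,j_0})\nabla\psi_{\ell,j}(x)$, so only the $\Omega_{\ell,j}$'s neighbouring $\Omega_{\ell,j_0}$ contribute. The union of two overlapping such pieces is again a John domain of scale $\ell$, and a further application of the fractional Poincar\'e inequality bounds $|f_{\ell,j}-f_{\ell,j_0}|\le C\ell^{s-n/p}|f|_{\widetilde W^{s,p}(\Omega_{\ell,j}\cup\Omega_{\ell,j_0})}$. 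Combined with $\|\nabla\psi_{\ell,j}\|_\infty \le C/\ell$ and finite overlap, this produces the companion bound $\ell^p\|\nabla h_\ell\|_{L^p}^p \le C\ell^{sp}\sum_j |f|_{\widetilde W^{s,p}(\widehat\Omega_{\ell,j})}^p$, where $\widehat\Omega_{\ell,j}$ is the enlargement of $\Omega_{\ell,j}$ by its neighbours, still of scale $\ell$.

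The two bounds together yield $K(\ell,f)^p \le C\ell^{sp}\sum_j |f|_{\widetilde W^{s,p}(\widehat\Omega_{\ell,j})}^p$, so the remaining task is to show
$$
\int_0^\tau \sum_j |f|_{\widetilde W^{s,p}(\widehat\Omega_{\ell,j})}^p \,\frac{d\ell}{\ell} \le C |f|_{\widetilde W^{s,p}(\Omega)}^p.
$$
I expect this final Fubini-type step to be the main obstacle: a crude exchange of the order of integration produces a logarithmic factor in $\ell$, since each pair $(x,y)$ with $|x-y|$ small lies in a common $\widehat\Omega_{\ell,j}$ for a whole range of $\ell$. The remedy is to refrain from bounding $|x-y|$ by $\ell$ in the Poincar\'e step and instead to keep the $|x-y|$-dependence explicit via Jensen-style estimates on the oscillation of $f$ around $f_{\ell,j}$; then, after exchanging orders, the $\ell$-integral $\int_{|x-y|}^\infty \ell^{-n-sp-1}\,d\ell \sim |x-y|^{-n-sp}$ reconstructs the fractional-seminorm integrand without logarithmic loss. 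Carrying out this reorganization while preserving the restriction $|x-y|<d(x)/2$ built into $\widetilde W^{s,p}$ is the technically delicate point of the argument.
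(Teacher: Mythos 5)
You have the right architecture (explicit $g_\ell, h_\ell$, scale $\ell$ control, Remark \ref{cotaK}, Fubini over $\ell$), and you correctly identify both the obstruction -- a naive application of a local fractional Poincar\'e inequality followed by summing in $j$ and integrating in $\ell$ produces $\int_0^\tau \frac{d\ell}{\ell}|f|^p_{\widetilde W^{s,p}(\Omega)}$, which diverges -- and the shape of the fix, namely keeping the $|x-w|$ dependence explicit so that the $\ell$-integral produces $|x-w|^{-n-sp}$. That is indeed the same mechanism the paper uses: in the paper's estimate the characteristic function $\chi_{|x-w|<\lambda\ell t/4}$ survives to the last line and yields $\int_{4|x-w|/(\lambda t)}^\infty \ell^{-n-sp-1}\,d\ell \sim (|x-w|/t)^{-n-sp}$.

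However, there is a genuine gap: you do not carry out this reorganization, and with your choice of $f_{\ell,j}$ it is not clear that you can. If $f_{\ell,j}$ is the plain average of $f$ over $\Omega_{\ell,j}$, then Jensen gives $f(y)-f_{\ell,j}=\frac{1}{|\Omega_{\ell,j}|}\int_{\Omega_{\ell,j}}(f(y)-f(x))\,dx$, and the pair $(y,x)$ ranges over all of $\Omega_{\ell,j}\times\Omega_{\ell,j}$. Near $\partial\Omega$ the distance $d(y)$ can be much smaller than $\mathrm{diam}(\Omega_{\ell,j})\sim\ell$, so there is no way to bound $|f(y)-f(x)|$ directly by the restricted kernel; you must chain through intermediate points staying proportionally far from the boundary, and you must do it in a quantitatively uniform way across the family of John domains $\Omega_{\ell,j}$ as $\ell\to 0$. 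The paper resolves exactly this by not taking the plain average at all: it sets $f_j=\int u(z,1)\varphi(z-x_j)\,dz$ with $u=f*\varphi_t$, $\mathrm{supp}\,\varphi\subset B(0,\lambda\ell/4)$, and transports $u$ along the John curve $\gamma_j(t,y)$; the John condition $d(\gamma_j(t,y))\ge\lambda\ell t$ is what guarantees the differences $f(x)-f(w)$ that appear satisfy $|x-w|\le\lambda\ell t/4<d(x)/2$, which is precisely the restriction built into $\widetilde W^{s,p}(\Omega)$. None of this is automatic from citing the Poincar\'e inequality of \cite{DD-frac,D,HV} as a black box, because that inequality gives you the $\ell^{sp}$ but throws away the $|x-w|$-information you need. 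Your last paragraph acknowledges that you have not done this step (``Carrying out this reorganization \ldots is the technically delicate point of the argument''); that step is the actual content of the proof.

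A secondary remark: your bound for $\nabla h_\ell$ via neighbour differences $|f_{\ell,j}-f_{\ell,j_0}|$ is workable, but it introduces the unnecessary claim that overlapping $\Omega_{\ell,j}$'s glue to a John domain of comparable constants, which requires verification. The paper avoids this by using $\sum_j\nabla\psi_j\equiv 0$ to write $\nabla h(y)=\sum_j(f_j-f(y))\nabla\psi_j(y)$, so that the \emph{same} pointwise estimate for $f-f_j$ already derived for the $g$-term applies verbatim; you may want to do the same.
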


\begin{proof}
Clearly, it suffices to prove that $\widetilde{W}^{s,p}(\Omega) \subset (L^p(\Omega), W^{1,p}(\Omega))_{s,p}$, since the converse is  always true by Theorem \ref{teo1}.

Given $f\in  \widetilde{W}^{s,p}(\O)$, for each $\ell$ sufficiently small, we let $h(y)=\sum_j f_j \psi_j(y)$, for certain values  $f_j$ that we will choose shortly, and $\psi_j$  the partition of unity given by Definition \ref{admissible}. Here, $f_j=f_{\ell,j}$ and $\psi_j=\psi_{\ell, j}$, but we have chosen to simplify notation. The reader should keep in mind that throughout this proof $f_j$ and $\psi_j$ depend on $\ell$.

To define $f_j$, recall that, by Definition \ref{admissible}, each $\O_j$ is a John domain with a distinguished point, say $x_j$, and constants of order $\ell$. Hence, 
there exists a John curve $\gamma_j$ such that $\gamma_j(0,y)=y$, $\gamma_j(1,y)=x_j$,  $d(\gamma_j(t,y))\ge \lambda \ell t$ and $|\dot \gamma_j|\le k\ell$. 

Observe that, if  $z\in B(x_j, \frac{\lambda \ell}{4})$ and we let $\tilde \gamma_j(t,y)=\gamma_j(t,y)+t(z-x_j)$, there holds   $\tilde \gamma_j(0,y)=y$ and $\tilde\gamma_j(1,y)=z$. Moreover, $|\gamma_j -\tilde\gamma_j|\le t|z-x_j| \le \frac{t\lambda \ell}{4}$, which implies that $\tilde\gamma_j \subset \Omega_j$.

Now, choose a smooth function $\varphi$ (also depending on $\ell$) such that  $supp(\varphi) \subset B(0,\frac{\lambda \ell}4)$, $ \varphi\ge 0$,  $\int \varphi=1$, $\|\varphi\|_p\le C \ell^{-\frac{n}{p'}}$, $\|\nabla\varphi\|_p\le C \ell^{-\frac{n}{p'}-1}$, and let $u(x,t)=f*\varphi_t(x)$ where $\varphi_t(x)= t^{-n}  \varphi(\frac{x}{t})$. We define  $f_j = \int u(z,1) \varphi(z-x_j) \,dz$.

Recall we want to estimate the $K$-functional, so it suffices to prove we can bound
\begin{equation}
\label{cotaK}
\int_0^\tau \ell^{-sp} \Big(\|g\|_{L^p(\O)} + \ell \|\nabla h\|_{L^p(\O)}\Big)^p \frac{d\ell}{\ell}\le C |f|_{\widetilde W^{s,p}(\O)}
\end{equation}
for  $\tau>0$ as in Definition \ref{admissible} (see Remark \ref{cotaK}).

By hypothesis, we have
\begin{equation}
\label{g}
\|g\|_{L^p(\O)}^p=\|f-h\|_{L^p(\Omega)}^p \le C \sum_j \|f-f_j\|_{L^p(\Omega_j)}^p,
\end{equation}
so, we begin by obtaining a pointwise bound for $f-f_j$. This computation is similar to the one used in \cite[Proposition 3.1]{DD-frac}. 

Observe that $\varphi(z-x_j)$ is supported in $B(x_j, \frac{\lambda\ell}{4})$ and has integral 1, hence, if we let $\eta_j(t)=u(\gamma_j(t,y)+t(z-x_j),t)$, we have that $\eta_j(0)=u(y,0)=f(y)$, $\eta_j(1)=u(z,1)$, and we may write
\begin{align*}
f(y)-f_j &= \int_{B(x_j, \frac{\lambda \ell}{4})} (f(y) -  u(z,1)) \varphi(z-x_j) \,dz \\
&= -\int_0^1  \int_{B(x_j, \frac{\lambda \ell}{4})} \eta_j'(t) \varphi(z-x_j) \, dz \, dt\\
&= -\int_0^1  \int_{B(x_j, \frac{\lambda \ell}{4})} \nabla u (\gamma_j(t,y)+t(z-x_j),t) \cdot (\dot\gamma_j +z-x_j)\varphi(z-x_j) \, dz \, dt \\
&\quad -\int_0^1  \int_{B(x_j, \frac{\lambda \ell}{4})}  \frac{\partial u}{\partial t} (\gamma_j(t,y)+t(z-x_j),t) \varphi(z-x_j) \, dz \, dt \\
\end{align*}

Making the change of variables $x=\tilde\gamma_j(t,y)= \gamma_j(t,y)+t(z-x_j)$, we know that $x\in \Omega_j$  and $dx= t^n dz$, whence,

\begin{align*}
f(y)-f_j &= -\int_0^1 \int_{\Omega_j} \nabla u(x,t) \cdot \Big(\dot\gamma_j + \frac{x-\gamma_j}{t}\Big) \varphi\Big(\frac{x-\gamma_j}{t}\Big) \frac{1}{t^n} \, dx \, dt\\
&\quad -\int_0^1  \int_{\Omega_j}  \frac{\partial u}{\partial t} (x,t) \varphi\Big(\frac{x-\gamma_j}{t}\Big)  \frac{1}{t^n}\, dx \, dt \\
\end{align*}

Now, using that  $\int \nabla \varphi=0$ and that $\int\frac{\partial\varphi_t}{\partial t}(x)dx=0$, we have that
$$
\nabla u(x,t) = f* \nabla \varphi_t(x) = \int_{\mathbb{R}^n} (f(x)-f(w)) \frac{1}{t^{n+1}} \nabla\varphi\Big(\frac{x-w}{t}\Big) \, dw,
$$
and
$$
\frac{\partial u}{\partial t}(x)= f*\frac{\partial }{\partial t} \varphi_t(x) = \int_{\mathbb{R}^n} (f(x)-f(w))  \Big[ \nabla\varphi\Big( \frac{x-w}{t} \Big) \cdot \Big(\frac{x-w}{t^{n+2}}\Big)+\varphi \Big(\frac{x-w}{t}\Big) \frac{1}{t^{n+1}}\Big] \, dw.
$$

Therefore, $f-f_j=-I_1-I_2-I_3$  with
\begin{align*}
I_1=& \int_0^1 \int_{\Omega_j}  \int_{\mathbb{R}^n} (f(x)-f(w)) \frac{1}{t^{n+1}} \nabla\varphi\Big(\frac{x-w}{t}\Big) \cdot \Big(\dot\gamma_j + \frac{x-\gamma_j}{t}\Big)  \, \varphi\Big(\frac{x-\gamma_j}{t}\Big) \frac{1}{t^n}  \, dw \, dx \, dt\\
I_2=&\int_0^1 \int_{\Omega_j}  \int_{\mathbb{R}^n} (f(x)-f(w)) \frac{1}{t^{n+1}}   \nabla\varphi\Big( \frac{x-w}{t} \Big) \cdot \Big(\frac{x-w}{t}\Big) \, \varphi\Big(\frac{x-\gamma_j}{t}\Big) \frac{1}{t^n}  \, dw \, dx \, dt\\\
I_3=&\int_0^1 \int_{\Omega_j}  \int_{\mathbb{R}^n} (f(x)-f(w)) \frac{1}{t^{n+1}} \varphi \Big(\frac{x-w}{t}\Big)    \, \varphi\Big(\frac{x-\gamma_j}{t}\Big) \frac{1}{t^n}  \, dw \, dx \, dt\\
\end{align*}

By hypothesis, $|\dot \gamma_j|<C\ell$, and using that $supp(\varphi) \subset B(0,\frac{\lambda \ell}4)$, we also have $|x-\gamma_j|<\frac{\lambda\ell t}4$ and $|x-w|\le \frac{\lambda \ell t}{4}$. Hence,
 
 \begin{align*}
|f-f_j| & \le C \int_0^1 \int_{\Omega_j}  \int_{\mathbb{R}^n} |f(x)-f(w)|  \frac{\ell}{t^{2n+1}} \Big|\nabla\varphi\Big(\frac{x-w}{t}\Big)\Big|  \, \Big|\varphi\Big(\frac{x-\gamma_j}{t}\Big)\Big|  \, dw \, dx \, dt\\
&+ C \int_0^1 \int_{\Omega_j}  \int_{\mathbb{R}^n} |f(x)-f(w)|  \frac{1}{t^{2n+1}} \Big|\varphi\Big(\frac{x-w}{t}\Big)\Big|  \, \Big|\varphi\Big(\frac{x-\gamma_j}{t}\Big)\Big|  \, dw \, dx \, dt\\
\end{align*}

Going back to \eqref{g}, we have
\begin{align*}
&\int_0^\tau \|g\|_{L^p(\O)}^p \, \ell^{-sp} \, \frac{d\ell}{\ell}\\
&\le C \int_0^\tau \sum_j \|f-f_j\|_{L^p(\O_j)}^p \, \ell^{-sp} \frac{d\ell}{\ell}\\
& \le C \int_0^\tau \sum_j \int_{\Omega_j} \Big( \int_0^1 \int_{\Omega_j} \int_{\mathbb{R}^n} |f(x)-f(w)| \frac{\ell}{t^{2n+1}}  \Big|\nabla\varphi\Big(\frac{x-w}{t}\Big)\Big|  \, \Big|\varphi\Big(\frac{x-\gamma_j}{t}\Big)\Big|  \, dw \, dx \, dt \Big)^p  dy \, \ell^{-sp} \, \frac{d\ell}{\ell}\\
& + C \int_0^\tau \sum_j \int_{\Omega_j} \Big( \int_0^1 \int_{\Omega_j}  \int_{\mathbb{R}^n} |f(x)-f(w)|  \frac{1}{t^{2n+1}} \Big|\varphi\Big(\frac{x-w}{t}\Big)\Big|  \, \Big|\varphi\Big(\frac{x-\gamma_j}{t}\Big)\Big|  \, dw \, dx \, dt \Big)^p  dy \, \ell^{-sp} \, \frac{d\ell}{\ell}\\
&= I + II 
\end{align*}

We will begin by estimating the integral $I$. To this end, observe first that 
$$
d_j(\gamma_j(t,y))\le |\gamma_j(t,y)-x|+d_j(x) \le \frac{\lambda \ell t}{4}+ d_j(x) \le \frac{d_j(\gamma_j(t,y))}{4} + d_j(x)
$$
and, therefore, using again the support of $\varphi$,
$$
|x-w|\le \frac{\lambda \ell t}{4}\le \frac{d_j(\gamma_j(t,y))}{4}\le \frac{d_j(x)}3 \le \frac{d(x)}3<\frac{d(x)}2,
$$
and 
$$
|x-y|\le |x-\gamma_j| + |\gamma_j - y|\le \frac{\lambda \ell t}{4} + k\ell t.
$$

Using these estimates and H\"older's inequality (in $dx \, dt$),
\begin{align*}
I &\le  C \int_0^\tau \sum_j \int_{\Omega_j} \int_0^1 \int_{|x-y|<C\ell t}  \Big( \int_{|x-w|<\frac{d(x)}{2}} |f(x)-f(w)|\frac{1}{t^{\frac{n+1}{p}+n+\frac{\varepsilon}{p'}}} \Big|\nabla\varphi\Big(\frac{x-w}{t}\Big)\Big| \, dw \Big)^p \, dx \, dt  \\
&\cdot \Big( \int_0^1 \int_{\R^n} \frac{1}{t^{n+1-\varepsilon}} \Big| \varphi\Big( \frac{x-\gamma_j}{t} \Big) \Big|^{p'}  \, dx \, dt  \Big)^\frac{p}{p'}   dy \, \ell^{-sp+p} \, \frac{d\ell}{\ell}
\end{align*}
In the integrals above, $\varepsilon>0$ is a sufficiently small exponent that will be chosen later. It allows us to compute separately
\begin{align*}
\Big( \int_0^1 \int_{\R^n} \frac{1}{t^{n+1-\varepsilon}} \Big| \varphi\Big( \frac{x-\gamma_j}{t} \Big) \Big|^{p'}  \, dx \, dt  \Big)^\frac{p}{p'}  &= \Big( \int_0^1 \frac{1}{t^{1-\varepsilon}} \|\varphi\|_{p'}^{p'} \, dt  \Big)^\frac{p}{p'} \\
&\le C \left( \int_0^1 \frac{1}{t^{1-\varepsilon}} \, \ell^{-\frac{np'}{p}}\, dt  \right)^\frac{p}{p'} \\
&=C \ell^{-n}
\end{align*}

Using this bound and H\"older's inequality again (in $dw$), we have
\begin{align*}
I &\le  C \int_0^\tau \sum_j \int_{\Omega_j} \int_0^1 \int_{|x-y|<C\ell t}  \Big( \int_{|x-w|<\frac{d(x)}{2}} |f(x)-f(w)|\frac{1}{t^{\frac{n+1}{p}+n+\frac{\varepsilon}{p'}}} \Big|\nabla\varphi\Big(\frac{x-w}{t}\Big)\Big| \, dw \Big)^p\\
&\qquad  \, dx \, dt \, dy \, \ell^{-n-sp+p} \, \frac{d\ell}{\ell}\\
&\le  C \int_0^\tau \sum_j \int_{\Omega_j} \int_0^1 \int_{|x-y|<C\ell t}  \Big( \int_{|x-w|<\frac{d(x)}{2}} |f(x)-f(w)|^p   \chi_{|x-w|<\frac{\lambda \ell t}4} \, dw  \Big) \\
&\quad \cdot \Big(\int_{\mathbb{R}^n}\frac{1}{t^n}  \Big|\nabla\varphi\Big(\frac{x-w}{t}\Big)\Big|^{p'} \, dw \Big)^\frac{p}{p'}  \frac{1}{t^{2n+1+\varepsilon(p-1)}}  \, dx \, dt \, dy \, \ell^{-n-sp+p} \, \frac{d\ell}{\ell}\\
&\le  C \int_0^\tau \sum_j \int_{\Omega_j} \int_0^1 \int_{|x-y|<C\ell t}   \int_{|x-w|<\frac{d(x)}{2}} |f(x)-f(w)|^p  \chi_{|x-w|<\frac{\lambda \ell t}4} \, dw   
 \frac{\ell^{-2n-sp}}{t^{2n+1+\varepsilon(p-1)}}   \, dx \, dt \, dy \, \, \frac{d\ell}{\ell}
\end{align*}
where in the last step we have used that  $\|\nabla\varphi\|_{L^{p'}(\R^n)}\le C \ell^{-\frac{n}{p}-1}$.

Hence, using the bounded overlap of the $\O_j$'s, we arrive at 
$$
I \le C \int_0^\tau \int_\O \int_0^1 \int_{|x-y|<C\ell t}   \int_{|x-w|<\frac{d(x)}{2}} |f(x)-f(w)|^p  \chi_{|x-w|<\frac{\lambda \ell t}4} \, dw   
 \frac{\ell^{-2n-sp}}{t^{2n+1+\varepsilon(p-1)}}   \, dx \, dt \, dy \, \, \frac{d\ell}{\ell}
$$

Notice that the above bound is independent of the partitions given by Definition \ref{admissible} at each scale $\ell$, so now we may interchange the order of integration and compute the integrals in the variables $y$, $\ell$ and $t$,  and we obtain
\begin{align*}
I &\le  C \int_\O  \int_0^1  \int_{|x-w|<\frac{d(x)}{2}} |f(x)-f(w)|^p \int_\frac{4|x-w|}{\lambda t}^\infty  \frac{\ell^{-2n-sp-1}}{t^{2n+1+\varepsilon(p-1)}} \int_{|x-y|<C \ell t}  \, dy \, d\ell \, dw  \, dt \, dx\\
&\le C  \int_\O  \int_0^1
\int_{|x-w|<\frac{d(x)}{2}} |f(x)-f(w)|^p
\int_\frac{4 |x-w|}{\lambda t}^\infty \ell^{-n-sp-1}   \, d\ell \, dw \frac{1}{t^{n+1+\varepsilon(p-1)}}  \, dt \, dx\\
&\le C \int_\O  \int_0^1  \int_{|x-w|<\frac{d(x)}{2}} \frac{|f(x)-f(w)|^p}{|x-w|^{n+sp}}  \, dw \, t^{sp-1-\varepsilon(p-1)}  \, dt \, dx\\
&\le C \int_\O    \int_{|x-w|<\frac{d(x)}{2}} \frac{|f(x)-f(w)|^p}{|x-w|^{n+sp}}  \, dw  \, dx\\
&\le C |f|_{\widetilde W^{s,p}(\Omega)}
\end{align*}
where, to integrate in $t$, we have used that $sp-1-\varepsilon(p-1)>-1$, which holds for sufficiently small $\varepsilon>0$.

It remains to bound the integral $II$, but the estimates are analogous if we observe that,  instead of using $\|\nabla\varphi\|_{L^{p'}(\R^n)}\le C \ell^{-\frac{n}{p}-1}$ we now have to use $\|\varphi\|_{L^{p'}(\R^n)}=\ell^{-\frac{n}{p}}$, which compensates for the missing $\ell$  in the numerator. This proves
\begin{equation*}
\int_0^\tau \|g\|_{L^p(\O)}^p \, \ell^{-sp} \, \frac{d\ell}{\ell} \le C |f|_{\widetilde W^{s,p}(\Omega)}.
\end{equation*}

We proceed now to bound
\begin{align*}
\int_0^\tau \|\nabla h\|_p^p \, \ell^{p(1-s)} \, \frac{d\ell}{\ell}.
 \end{align*}
Recall that, by definition of $h$, 
 $$
 |\nabla h(y)|=\Big|\sum_j f_j \nabla \psi_j(y)\Big| \le \sum_j |f_j-f(y)| |\nabla \psi_j(y)| \le \sum_j |f_j - f(y)| \frac{1}{\ell}
 $$
 Hence,
 \begin{equation*}
 \int_0^\tau \|\nabla h\|_{L^p(\O)}^p \, \ell^{p(1-s)} \frac{d\ell}{\ell} \le C \int_0^\tau \sum_j \|f_j - f\|_{L^p(\O_j)}^p \, \ell^{-sp} \frac{d\ell}{\ell} \le  C |f|_{\widetilde W^{s,p}(\Omega)}
 \end{equation*}
 as above. Therefore, putting both estimates together, we obtain  \eqref{cotaK}.
\end{proof}

\section{Examples }

In this section we show how one can prove that certain domains are admissible in the sense of Definition \ref{admissible}. 

Our first example includes the domain in Remark \ref{slit} (a typical John domain)  for which the interpolation space was previously uncharacterized. For simplicity, we consider domains in $\R^2$, but the reader will have no difficulty in generalizing the following proposition to higher dimensions.

We then turn to the case of simply connected uniform domains in the plane, and show that they are admissible. The proof relies on the fact that every  domain in this class is bi-Lipschitz equivalent to a member of a specific family of snowflake domains. 
 
\begin{prop}
If $\O$ is a connected open
\label{polygon} polygon in $\R^2$, possibly with interior holes or fractures (see Figure 1), then $(L^p(\O), W^{1,p}(\O))_{s,p}=\widetilde W^{s,p}(\O)$.
\end{prop}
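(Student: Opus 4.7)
The plan is to prove that every such polygon $\O$ is admissible in the sense of Definition \ref{admissible}, since the conclusion then follows from the second theorem of Section~3. The key geometric observation is that $\O$ fails to be locally Lipschitz only at the finitely many tips of its interior fractures; near every other boundary point $\O$ is locally the graph of a piecewise linear Lipschitz function with a uniformly bounded Lipschitz constant.

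I would fix $\tau>0$ small enough that, writing $\{p_1,\dots,p_m\}$ for the set of fracture tips, the balls $B(p_i,4\tau)$ are pairwise disjoint and each $B(p_i,4\tau)$ meets $\partial\O$ only along the slit of which $p_i$ is the tip. For every $\ell<\tau$ I would then assemble the cover $\{\O_{\ell,j}\}$ out of two kinds of pieces: (a) for each tip $p_i$, the slit disk $\O_{\ell,i}^{\mathrm{tip}}:=B(p_i,2\ell)\cap\O$; and (b) for the complement $\O\setminus\bigcup_i B(p_i,\ell)$, the intersections of $\O$ with a standard grid of axis-parallel squares of side of order $\ell$, slightly adjusted so as to retain bounded overlap and so that each piece is a Lipschitz subdomain of diameter comparable to $\ell$. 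A smooth partition of unity subordinate to this cover with $\|\psi_{\ell,j}\|_\infty\le C$ and $\|\nabla\psi_{\ell,j}\|_\infty\le C\ell^{-1}$ can then be built in the usual manner, and properties (1) and (4) of Definition \ref{admissible} are immediate from the construction.

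The main technical point, and the step I expect to be the principal obstacle, is to verify that each slit disk $\O_{\ell,i}^{\mathrm{tip}}$ is a John domain with constants of order $\ell$. As the distinguished point I would take $x_i\in\O_{\ell,i}^{\mathrm{tip}}$ lying on the bisector of the slit, at distance of order $\ell$ from the tip $p_i$. A John curve from an arbitrary $y\in\O_{\ell,i}^{\mathrm{tip}}$ to $x_i$ is then built by an explicit geometric recipe: if $y$ and $x_i$ lie on the same side of the slit one takes essentially a straight segment, and otherwise one first travels along an arc of radius comparable to $d(y)$ around the tip $p_i$, so as to remain at bounded distance from the boundary, and then joins $x_i$ by a near-straight segment. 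Polar-coordinate bookkeeping around $p_i$ yields both the length bound $|\dot\gamma|\le k\ell$ and the linear growth $d(\gamma(t))\ge\lambda\ell t$, giving the required John constants of order $\ell$. For the Lipschitz pieces of type (b) the John property with constants of order $\ell$ is standard, so Definition \ref{admissible} is satisfied and the proposition follows from the admissibility theorem of Section~3.
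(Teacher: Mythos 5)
Your plan — reduce to admissibility and build the cover from slit disks near fracture tips plus grid squares elsewhere — is a genuinely different construction from the paper's. The paper instead triangulates $\O$ at mesh size $\sim\ell$, takes the $\O_{\ell,j}$'s to be vertex patches (unions of triangles sharing a vertex), and takes the $\psi_{\ell,j}$'s to be the piecewise linear Lagrange hat functions, with the crucial convention that a vertex lying on a slit is counted once for each side of the slit. That convention is what makes the pieces respect the slit along its \emph{entire} length, and the required properties (finite overlap, diameter $\sim\ell$, John constant $\sim\ell$, $\|\nabla\psi_{\ell,j}\|_\infty\lesssim\ell^{-1}$) then come essentially for free from shape-regularity of the mesh.

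As written, your argument has a gap precisely where the paper's convention does its work. The opening claim that $\O$ ``fails to be locally Lipschitz only at the finitely many tips of its interior fractures'' is false: at any interior point $x$ of a slit, $B(x,r)\cap\O$ is a slit disk, whose boundary is not the graph of a Lipschitz function in any rotated coordinate system. Consequently, a type (b) grid square that straddles the slit yields, upon intersection with $\O$, a \emph{disconnected} set, and the phrase ``slightly adjusted so that each piece is a Lipschitz subdomain'' is hiding the main step: every such square must be split into two pieces, one on each side of the slit, and the subordinate partition of unity must be built so that each bump is supported on a single side (it may jump across the slit, since the slit is not in $\O$, but a ``standard'' partition of unity built in $\R^2$ will not automatically do this). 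Your treatment of the slit-disk pieces around the tips is fine, and once the splitting along the whole slit and the one-sided partition of unity are carried out explicitly, the plan goes through; but the crux of the construction for this class of domains is the slit interior, not the tips, and the proposal currently passes over it.
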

\begin{proof}
To check that $\O$ is an admissible domain in the sense of Definition \ref{admissible} observe that, for each $\ell$ sufficiently small, $\O$ admits a triangulation $\mathcal{T}_\ell$ such that the radii of all inscribed circles and all circumcircles are comparable to $\ell$. 

We number all vertices of  $\mathcal{T}_\ell$  with the convention that each vertex shared by triangles separated by the boundary of the domain has to be considered as two separate vertices. Associated to the set of vertices $\{v_i\}_{i=1}^N$ (here $N=N(\ell)$), we can define sets  $\O_i$ as the interior of the union of all triangles of $\mathcal{T}_\ell$ containing $v_i$, and the Lagrange basis $\{\psi_i\}_{i=1}^N$ as the set of piecewise linear functions such that each $\psi_i$ is supported on $\bar\O_i$, $\psi_i(v_i)=1$, and $\psi_i(v_j)=0$ for all $j\neq i$. It is easy to check that this construction satisfies all the required hypotheses.
\end{proof}

\begin{center}
\includegraphics[width=4cm]{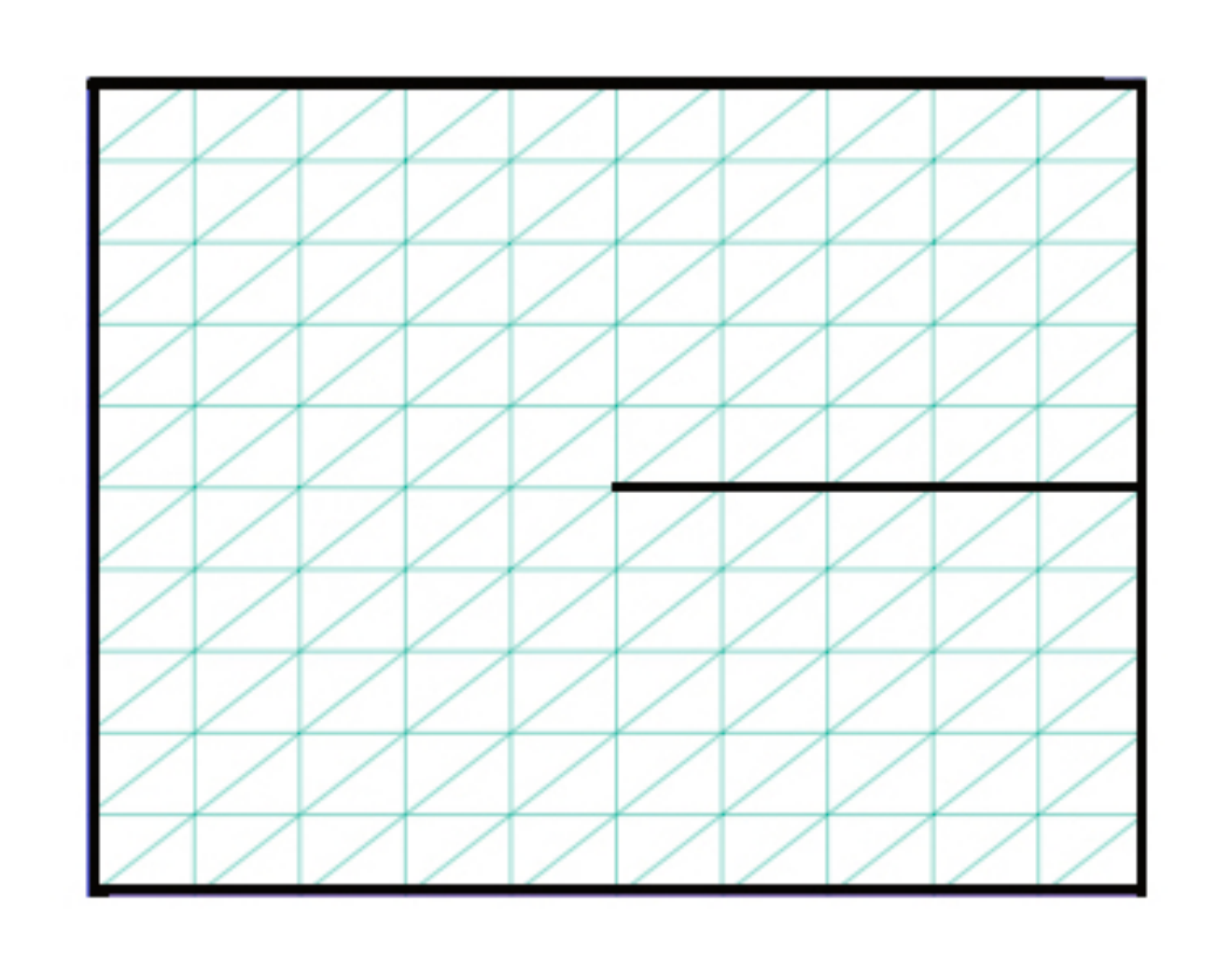} \; \; \; \; \;  \; \includegraphics[width=4cm]{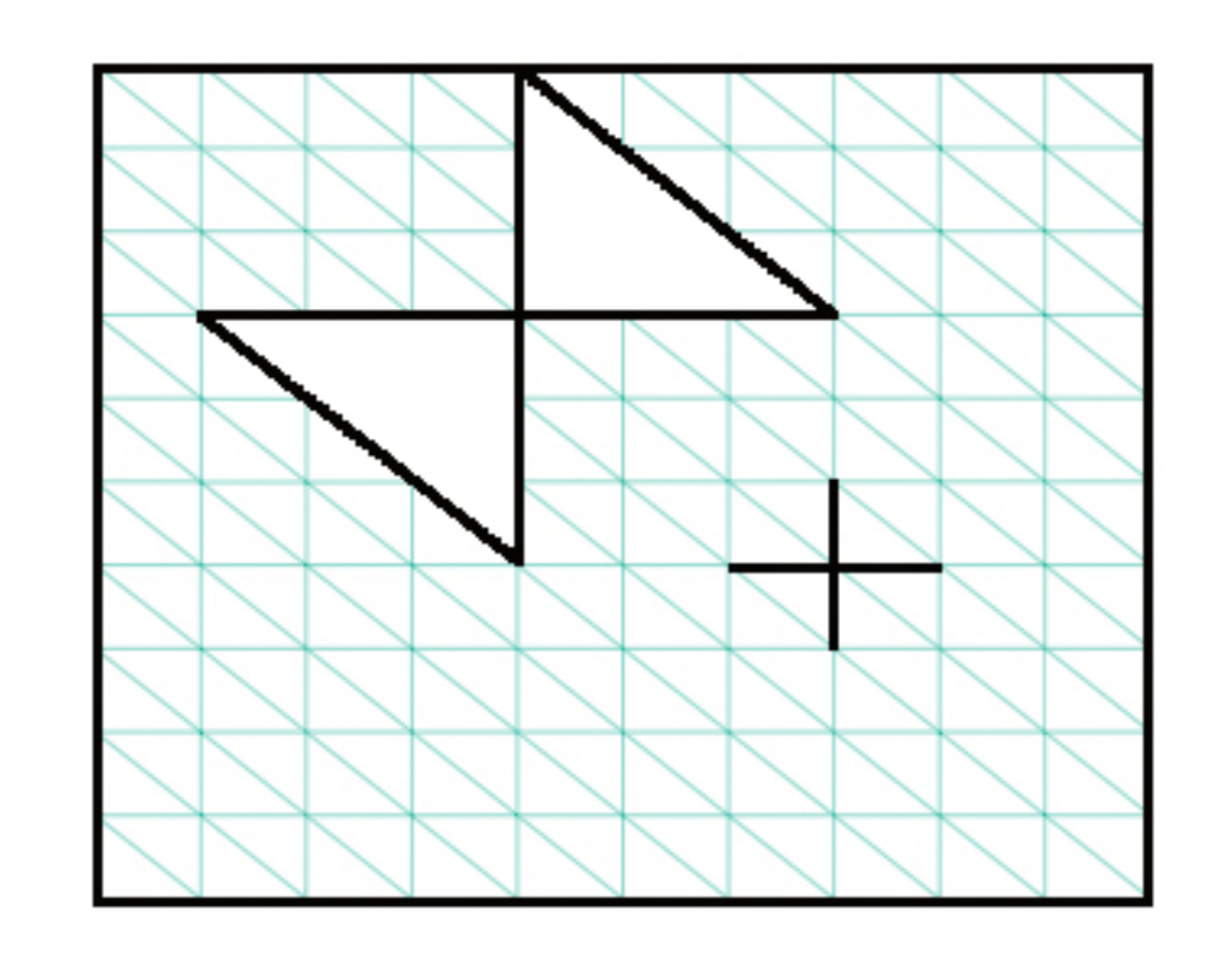}
\captionof{figure}{Examples of admissible non-Lipschitz domains in $\R^2$}
\end{center}

For our next example we need to recall some necessary definitions

\begin{defi}
We say that a bounded domain $\O\subset\R^n$ is uniform if there exist constants $a$ and $b$ such that each $x, y \in \O$ can be joined by an arc $\gamma$ in $\O$ with
\begin{enumerate}
\item $l(\gamma) \le a |x-y|$
\item $\min_{j=1,2} l(\gamma_j) \le b d(z)$ 
\end{enumerate}
for $z\in \gamma$, where $\gamma_1$, $\gamma_2$ are the components of $\gamma\setminus\{z\}$.
\end{defi}

\begin{remark}
Clearly, any uniform domain is a John domain, but the converse is not true, as can be seen in the case of the domain in Example \ref{slit}.
\end{remark}

\begin{defi}
We say that a homeomorphism $f:\R^2\to \R^2$ is quasiconformal if $f\in W^{1,2}_{loc}$ and there exists $K\ge 1$ such that $|Df(x)|^2\le K Jf(x)$ almost everywhere, where $Jf$ is the Jacobian. 
\end{defi}
\begin{defi}
 A quasicircle is the image of a circle under a quasiconformal map of the plane, and a quasidisk is the interior domain of a quasicircle.
\end{defi}

\begin{prop}
\label{uniform}
If $\O \subset \R^2$ is a simply connected uniform domain, then $(L^p(\O), W^{1,p}(\O))_{s,p}=\widetilde W^{s,p}(\O)$.
\end{prop}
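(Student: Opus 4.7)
The plan is to verify that every simply connected uniform planar domain is admissible in the sense of Definition \ref{admissible}; the proposition then follows at once from the main theorem of Section~3.

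First, I would check that admissibility is invariant under bi-Lipschitz equivalence. If $\Phi\colon\O'\to\O$ is bi-Lipschitz and $\{\O'_{\ell,j},\psi'_{\ell,j}\}$ satisfies conditions (1)--(4) of Definition \ref{admissible} for $\O'$, then the pushed-forward partition $\O_{\ell,j}:=\Phi(\O'_{\ell,j})$ together with the cut-offs $\psi_{\ell,j}:=\psi'_{\ell,j}\circ\Phi^{-1}$ satisfies the same properties on $\O$, up to constants depending only on the bi-Lipschitz constant of $\Phi$. Diameters, bounded overlap, the $L^\infty$ bounds on $\psi$ and $\nabla\psi$, and the John property are all stable under bi-Lipschitz maps.

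Second, I would invoke the classical identification of simply connected uniform planar domains with quasidisks, combined with the fact alluded to in the paragraph preceding the statement: every quasidisk in $\R^2$ is bi-Lipschitz equivalent to a member of a specific family of snowflake-type domains. By Step~1 it therefore suffices to exhibit admissible partitions on each such snowflake domain.

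Third, I would build the partitions directly on a snowflake domain $\O_s$ by exploiting its self-similar structure. For $\ell$ of the form $\lambda^k$, where $\lambda$ is the contraction ratio of the defining iterated function system, the natural generation-$k$ decomposition produces pieces attached to each boundary arc of generation $k$, while the interior is filled with Whitney-type cells of side $\sim\ell$; for values of $\ell$ between $\lambda^{k+1}$ and $\lambda^k$ one interpolates between consecutive generations. Each piece is a bounded piecewise-linear region and, by self-similarity, is a John domain with constants of order $\ell$ uniformly in $k$. The associated partition of unity is obtained by mollifying the characteristic function of each piece at scale $\sim\ell$, which yields the required bounds $\|\psi_{\ell,j}\|_\infty\le C$ and $\|\nabla\psi_{\ell,j}\|_\infty\le C\ell^{-1}$, and the bounded overlap follows from the fact that the generation-$k$ pieces tile $\O_s$ with finitely many neighbors per piece.

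The main obstacle I expect is the third step: verifying that the self-similar boundary pieces are John domains with John constants of the correct order, uniformly in the scale $\ell$. The crucial observation is that a single generation-one piece is a John domain, and self-similarity then propagates the property to every generation with essentially the same constants, after rescaling. Step~1 is routine bi-Lipschitz bookkeeping, and Step~2 is invoked as a black box from planar quasiconformal geometry.
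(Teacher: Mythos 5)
Your high-level strategy matches the paper's: reduce to Rohde's snowflake family via bi-Lipschitz equivalence (your Steps 1--2 are exactly the paper's opening moves and correct), then build admissible partitions directly on the snowflake. However, your Step 3 has genuine gaps. First, Rohde's snowflakes are \emph{not} attractors of a single iterated function system with one contraction ratio: at each generation one may replace a segment of length $L$ either by four pieces of length $L/4$ or by a polygonal arc with four pieces of length $pL$ (with $1/4<p<1/2$), so generation-$k$ segments have varying lengths between $4^{-k}$ and $p^k$. A decomposition indexed by ``$\ell=\lambda^k$'' therefore does not produce pieces of comparable size, and ``interpolating between consecutive generations'' would require care you have not supplied. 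The paper sidesteps this by choosing $N$ with $p^N\sim\ell/K$, passing to the \emph{polygon} $D_N$ obtained at generation $N$, and re-triangulating $D_N$ uniformly at scale $\ell/K$ as in Proposition \ref{polygon}; the snowflake structure enters only through ``descendant'' triangles attached to each union $O_i$, which gives pieces $\tilde O_i$ with the right size and John constants. Second, mollifying characteristic functions does not automatically yield a partition of unity, and a naive mollification near the fractal boundary will not keep each $\psi_{\ell,j}$ supported inside its $\O_{\ell,j}$ while summing to $1$ on $\O$; the paper instead takes the piecewise-linear Lagrange basis on $D_N$, extends each function by its same linear formula to the descendant triangles, and then \emph{normalizes} by dividing by $\sum_j\tilde\phi_j$ to restore the partition-of-unity property. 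Your boundary-pieces-plus-Whitney-cells picture is also not what the paper does; the paper's descendant construction avoids having to match Whitney cubes to fractal boundary pieces. So while your overall plan is sound and aligned with the paper, Step 3 as written would not go through without substantive repair along the lines just indicated.
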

\begin{proof}
It is known  that a simply connected planar domain $\O$ is a quasidisk if and only if it is a uniform domain \cite[Theorem 2.24]{MS}, and that  all quasidisks are  essentially snowflake domains, up to applying a bi-Lipschitz map of the plane \cite[Theorem 1.1]{R}. 

Let us sketch this equivalence: the author of \cite{R} constructs a family $\mathcal{S}$ of snowflake-type curves inductively, resembling the construction of the van Koch snowflake, but with two replacement options instead of one. Namely, he begins with the unit square and a fixed parameter $p$, $\frac14< p<\frac12$, and at each step he replaces every line segment (say of length $L$) in the $n$th generation polygon by either 
\begin{enumerate}
\item four disjoint subintervals of length $\frac{L}4$ or
\item a polygonal arc with four segments of length $pL$. 
\end{enumerate}
Then he proves that every closed quasicircle in the plane is the bi-Lipschitz image of some element  in $\mathcal{S}$. 

If $D$ is the bounded domain whose boundary is one of the snowflakes in $\mathcal{S}$ obtained by the above process, we can equivalently think of the construction of $D$ as starting with the unit square and choosing at each step whether to add or not rotated rescaled triangles. Therefore, every time we replace a line segment of the boundary by a polygonal arc, we may refer to the triangle delimited by the original boundary as a ``parent", and those added to its sides as its ``children". The ``children" of any subsequent step of a given triangle are its ``descendants".

To prove that a simply connected uniform domain $\O$ is admissible in the sense of Definition \ref{admissible}, assume that $\O=f(D)$ where $\partial D\in \mathcal{S}$ and $f:\R^2 \to \R^2$  is bi-Lipschitz with constant $K$, i.e. $K^{-1}|x-y|\le |f(x)-f(y)|\le K |x-y|$ for all $x,y\in\R^2$. 

For a given $\ell$ let $N$ be such that $p^N \sim \frac{\ell}{K}$, where $p$ is the parameter used to construct $D$. If we stop at the $N$-th step in the construction of $D$, we obtain a polygon $D_N \subseteq D$, which, as in the proof of Proposition \ref{polygon} admits a covering by finitely overlapping unions of triangles of size $\ell/K$, say $\{O_i\}$,  and an associated partition of unity $\{\phi_i\}$ of piecewise linear functions supported on those $O_i$'s. Now, if we define $\tilde O_i$ as the union of the triangles of $O_i$ and all their descendants, we have that $D=\cup_i \tilde O_i$, that the $\tilde O_i$'s are finitely overlapping, and that their diameters are of order $p^N \sim \frac{\ell}{K}$. Moreover, it is easy to see that  each $\tilde O_i$ is a John domain (actually, it is uniform) with constants of order $\ell$. 

To construct a  partition of unity on $D$, if $\phi_i$ vanishes on the boundary of $D_N$, we define  $\tilde\phi_i=\phi_i$ on $O_i$ and zero otherwise. If  $\phi_i$ does not vanish on the boundary of $D_N$, observe that, since it is  is piecewise linear, it has a natural extension $\tilde \phi_i$ (given by the same formula) to  $\tilde O_i$ and can be extended by zero otherwise. 

Notice that $\|\tilde \phi_i\|_\infty \le C$, $\|\nabla \tilde \phi_i\|_\infty \le C\ell^{-1}$, and $\sum_i \tilde \phi_i = 1$ on $D_N$,  but the latter may not be the case on $D \setminus D_N$, so we define $\tilde \psi_i = \frac{\tilde \phi_i}{\sum_j \tilde \phi_j}$ which clearly satisfies $\sum_i \tilde \psi_i =1$,  and also, using that $\sum_j \tilde \phi_j \ge 1$, 
$\|\tilde \psi_i\|_\infty \le C$ and  $\|\nabla \tilde \psi_i\|\le C \ell^{-1}$.

Finally, the required partition in $\O$ is given by $\O_i=f(\tilde O_i)$ and the associated partition of unity is given by $\psi_i =  \tilde \psi_i \circ f^{-1}$.
Since $f$ is bi-Lipschitz, each $\O_i$ is a John domain with has size and constants of order $\ell$ (see \cite[section 2.14]{MS})  and  it is easy to check that the remaining conditions of Definition \ref{admissible} are satisfied.
\end{proof}

\end{document}